\let\realverbatim=\verbatim
\let\realendverbatim=\endverbatim
\renewcommand\verbatim{\par\addvspace{6pt plus 2pt minus 1pt}\realverbatim}
\renewcommand\endverbatim{\realendverbatim\addvspace{6pt plus 2pt minus 1pt}}
         \let\leq=\leqslant
         \let\geq=\geqslant
\newsavebox{\astrutbox}
\sbox{\astrutbox}{\rule[-5pt]{0pt}{20pt}}
\newtheorem{proposition}{Proposition}[section]
\newtheorem{lemma}{Lemma}
\newdefinition{definition}[theorem]{Definition}
\newdefinition{assumption}{Assumption}[section]
\newdefinition{assumptions}{Assumptions}[section]
\newdefinition{remark}{Remark}
\def\Rset{\mathbb{R}}
\def\ds{\displaystyle}
\title[Properties of the chemostat model with aggregated biomass]{Properties of the chemostat model
  with aggregated biomass}
\author[A. Rapaport]{Alain Rapaport$\,^1$}
\affiliation{$^1\,$ MISTEA, U. Montpellier, INRA, Montpellier SupAgro, France.
    email\textup{\nocorr: \texttt{alain.rapaport@inra.fr}}
}
\date{2 March 2018}
\begin{document}

\label{firstpage}
\maketitle

\begin{abstract}
We revisit the well-known chemostat model, considering that bacteria
can be attached together in aggregates or flocs. 
We distinguish explicitly free and attached
compartments in the model and give sufficient conditions for coexistence of
these two forms. We then study the case of fast attachment and
detachment and shows how it is related to density-dependent growth
functions.
Finally, we give some insights concerning the cases of multi-specific flocs and
different removal rates.
\end{abstract}

\begin{keywords}
92B05, 92D25, 37N25, 34A34.
\end{keywords}

\section{Introduction}

Attachment and detachment phenomena of bacteria, whether in biofilms
on a support \cite{C95,IWA06} or in the form of aggregates or flocs
\cite{TJF99} are well known and frequently observed in bacterial growth. 
Nevertheless, it is only relatively recently that they have been
explicitly taken into account in chemostat-based mathematical models. 
The Freter model \cite{FBFVC83,JKLS03}, proposed in the 1980s as a functional model
of the intestine bacterial ecosystem, is one of the very first to
explicitly distinguish planktonic biomass from attached biomass. This
model considers specific attachment and detachment terms and has been
mathematically studied in a spatialized form by introducing advection
and diffusion terms \cite{BJS08}.
Several works in the biomathematical literature consider extensions to
the chemostat model spatialized with (fixed) attachment on a wall by
\cite{BS99,JKLS03,SS00}. In general, flocculation models describe the
dynamics of the distribution of flocs sizes \cite{TJF99} and their
influence on growth dynamics \cite{HLH07}, but comparatively there are
relatively few studies of simplified models that only distinguish two biomass compartments: planktonic and attached. In \cite{HR08}, it is shown for
such models that total biomass growth follows a density-dependent
distribution, under the assumption that attachment and detachment
velocities are large compared to biological terms. This is in
accordance with experimental observations that have showed that the
kinetics of processes with attached biomass are better represented by
ratio-dependent \cite{HG07} expressions.

The purpose of the present work is to generalize the existing
results concerning these simplified models.

The majority of models of the literature consider explicit
attachment and detachment term expressions. We adopt here a more
general presentation
which does not particularize the specific attachment and detachment
kinetics terms 
and thus namely includes existing models \cite{TSJ97,PW99,JKLS03}. 
In every case, the assumptions
about faster growth and higher planktonic bacteria removal rate are justified by
experimental observations \cite{HMC09}. This allows us to consider reduced
models considering the total biomass instead of planktonic and
attached ones, which provides extensions of the well-know chemostat
model with unusual characteristics.

It should be observed that attachment and detachment
velocities can be of a very variable order of magnitude, according to procedures
and operating conditions \cite{BK95}, justifying the fact of considering
reduced models or not. 

\section{A general formulation}

Under certain growth conditions and in some environments, microbial species may
present aggregates of microorganisms or flocs of various sizes (see
Figure \ref{figattachement}). Microorganisms can also attach themselves to the walls of
tanks, pipes, reactors, etc. (or more generally of any chemostat-based
device), and thus create biofilms with varied thicknesses. Over time,
micro-organisms, parts of flocs or of biofilms, detach and are
  released in the liquid medium as isolated individuals or small-sized aggregates (see Figure \ref{figdetachement}).
These bacterial assemblages (which can be observed under the
microscope) affect the performance of chemostats at the macroscopic
level, namely regarding:
\begin{itemize}
\item the growth of biomass: bacterial individuals have differentiated
  access to biotic resource (substrate)
  depending on their position inside or on the periphery of assemblies. In addition,
  microorganism secretions of polymers that enable the attachment are generally achieved
  to the detriment of their growth.
\item the disappearance of biomass: flocs and biofilms are most often less likely to be
  dragged away by the chemostat outflow, comparatively to isolated
  individuals.
\end{itemize}
The appearance and evolution mechanisms of these assemblies, which at the same
time relate to biology, mechanics and hydrodynamics, are complex, partially understood
and difficult to be modeled at a microscopic scale. Our objective is
to study how the conventional model of the chemostat can be enriched with considerations
reflecting the effects of biomass attachment and detachment at the macroscopic
level (in other words, without representing all the refinements that a description would
bring at the microscopic level).
\begin{figure}[h!]
  \begin{center}
    \includegraphics[width=.4\textwidth]{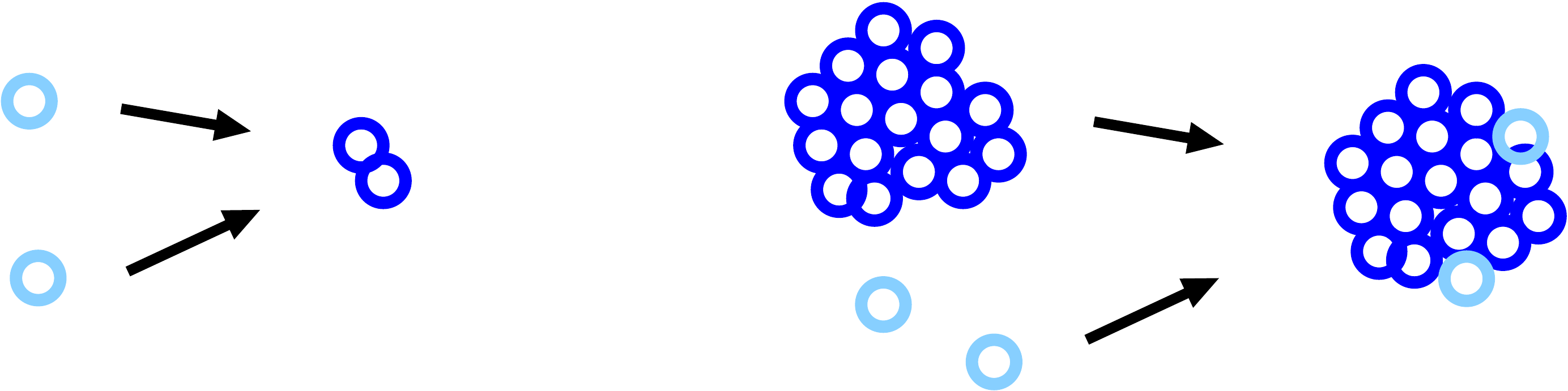}
    \caption{Isolated individuals may aggregate to form
      a floc, or else attach to an already formed aggregate.  \label{figattachement}}
  \end{center}
\end{figure}
\begin{figure}[h!]
\begin{center}
    \includegraphics[width=.4\textwidth]{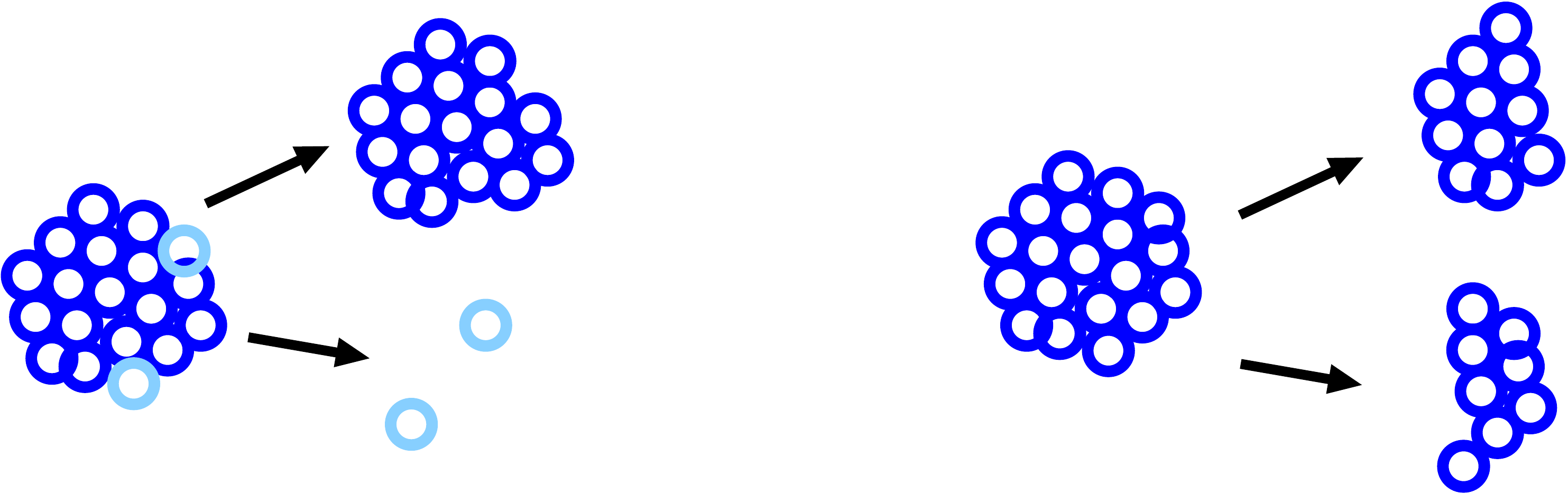}
    \caption{Individuals can detach from an aggregate.
      An aggregate can be split into smaller aggregates.
      \label{figdetachement}}
  \end{center}
\end{figure}

We consider that the total biomass of a given species is decomposed
into "planktonic" (or "free") biomass made up of non-attached
microorganisms (or at least that behave as such; which may still be
the case of small assemblies) and "aggregate" biomass (without accurately taking account of the shape and of the size of assemblies).
Thus, we write the concentration $x$ of the total biomass as the sum
of concentrations $u$ and $v$ of planktonic and aggregate biomass,
respectively:
\begin{equation}
  \label{somme}
  x=u+v \ .
\end{equation}
This distinction allows us to take into account different growth and death characteristics
according to whether microorganisms are attached or not. We thus denote
respectively by $\mu_{u}(\cdot)$, $D_{u}$ and $\mu_{v}(\cdot)$, $D_{v}$
the specific growth and removal rates of
planktonic and aggregate compartments. 
$D_{u}$ and $D_{v}$ are positive numbers and $\mu_{u}(\cdot)$,
  $\mu_{v}(\cdot)$ are smooth functions that verify
$\mu_{u}(0)=\mu_{v}(0)=0$ and positive away from zero.
On the other hand, we denote the specific
velocities of attachment of planktonic biomass by $\alpha(\cdot)$ and
by $\beta(\cdot)$ the ones of detachment
of the attached biomass. As a result, we obtain the following
chemostat model, where $s$ denotes the substrate concentration:
\begin{equation}
  \label{chem_attach}
  \left\{
  \begin{array}{lll}
    \ds \frac{ds}{dt} = & \ds D(S_{in}-s)-\mu_{u}(s)u-\mu_{v}(s)v\\[4mm]
    \ds \frac{du}{dt} =  & \mu_{u}(s)u-D_{u}u
    -\alpha(u,v)u+\beta(v)v\\[4mm]
    \ds \frac{dv}{dt} =  & \mu_{v}(s)v-D_{v}v
    +\alpha(u,v)u-\beta(v)v
  \end{array}\right.
\end{equation}
The positive parameters $D$ and $S_{in}$ denote the dilution rate and
input concentration of the substrate.
As usual in chemostat models, we take unit yield coefficients without loss of generality. The simplicity
of this representation, which does not account for the richness of forms and
possible sizes of aggregates, should be regarded as the considering of an average microorganism
behavior within aggregates or biofilms, which differs from that of isolated
microorganisms. Since it is difficult to obtain or to justify precise expressions of the
attachment and detachment terms for this type of model, our purpose is to understand
and qualitatively predict the possible effects of these terms on the dynamics of the
system (to this end, we will merely consider simple expressions as possible representatives).
It should be noted that the attachment and detachment terms depend on the
operating conditions (in particular the flow rate), that we
consider here to be fixed.

\medskip

We first show that the solutions of system (\ref{chem_attach}) stay non-negative
and bounded, as in the classical chemostat model. 

\begin{lemma}
The non-negative orthant $\Rset_{+}^3$ is forwardly invariant by the dynamics (\ref{chem_attach})
and any solution in this domain is bounded.
\end{lemma}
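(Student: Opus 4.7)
The plan is to handle the two claims separately: first forward invariance of $\Rset_+^3$, then boundedness, using the conservation-like structure afforded by the unit yield convention.

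For forward invariance, I would invoke the standard tangency criterion: it suffices to check that on each of the three coordinate faces of $\Rset_+^3$, the vector field in (\ref{chem_attach}) has a non-negative component along the inward normal. On the face $s=0$ we have $ds/dt = D S_{in} > 0$. On the face $u=0$, the terms $\mu_u(0)\cdot 0$, $D_u \cdot 0$ and $\alpha(0,v)\cdot 0$ all vanish, leaving $du/dt = \beta(v) v \geq 0$ (using that $\beta$ is a velocity, hence non-negative, and $v\ge 0$ on the face). Symmetrically, on the face $v=0$ we get $dv/dt = \alpha(u,0) u \geq 0$. Since the right-hand sides are smooth enough for local existence and uniqueness, non-negativity of each component is preserved along trajectories.

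For boundedness, I would exploit that attachment and detachment are exchange terms between $u$ and $v$ while $s$ is converted into biomass with unit yield. Setting $z := s + u + v$ and summing the three equations, the attachment/detachment contributions cancel and the growth terms cancel against the substrate consumption, leaving
\begin{equation*}
\frac{dz}{dt} = D(S_{in}-s) - D_u u - D_v v \leq D S_{in} - \delta\, z,
\qquad \delta := \min(D, D_u, D_v) > 0.
\end{equation*}
A direct comparison with the scalar linear ODE $\dot w = D S_{in} - \delta w$ yields
\begin{equation*}
z(t) \leq z(0) e^{-\delta t} + \frac{D S_{in}}{\delta}(1-e^{-\delta t}),
\end{equation*}
so $z$ is bounded on $[0,\infty)$, and hence so are $s$, $u$, $v$ individually by the non-negativity just established. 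This also guarantees that solutions are defined for all $t\ge 0$.

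I do not anticipate a substantive obstacle: the only delicate point is checking that the attachment/detachment terms do not break non-negativity on the $\{u=0\}$ and $\{v=0\}$ faces, which works precisely because $\alpha(u,v)u$ and $\beta(v)v$ vanish on the respective face and reappear with the correct sign on the other equation. Implicitly one uses that $\alpha$ and $\beta$ take non-negative values, which is natural given their physical interpretation; if this is not formally stated in the excerpt, it should be flagged as a standing assumption.
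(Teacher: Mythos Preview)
Your proposal is correct and follows essentially the same approach as the paper: check the sign of the vector field on each coordinate face for invariance, then bound the total mass $s+u+v$ via a linear differential inequality for boundedness. The only cosmetic differences are that the paper first argues $u+v$ stays positive before treating the $u=0$ and $v=0$ faces, and it uses $D_v$ in place of your $\delta=\min(D,D_u,D_v)$ (anticipating the ordering $D\geq D_u\geq D_v$ stated just after the lemma); your version with $\delta$ is in fact slightly cleaner since it does not rely on that ordering.
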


\begin{proof}
At $s=0$, one has $\dot s=DS_{in}>0$. Therefore $s$ stays positive.
One has $\frac{d}{dt}(u+v) \geq (\mu_{v}(s)-D_{u})(u+v)$, which
shows that $x=u+v$ stay positive. At $u=0$, resp. $v=0$, one
has $\frac{d}{dt}u \geq \beta(\cdot)x\geq 0$, resp. $\frac{d}{dt}v \geq
\alpha(\cdot)x \geq 0$. Therefore the variables $u$ and $v$ stay
non-negative. Finally, on has $\frac{d}{dt}(s+u+v)\leq
DS_{in}-D_{v}(s+u+v)$ which shows that the quantity $s+u+v$ is
bounded, and a consequence, $s$, $u$ and $v$ also.
\end{proof}

Hereafter, we consider the following assumptions, which reflect the
considerations discussed in the introduction:
\begin{assumptions}
  The kinetics functions $\mu_{u}(\cdot)$,  $\mu_{v}(\cdot)$,
  $\alpha(\cdot)$, $\beta(\cdot)$ and parameters $D$, $D_{u}$, $D_{v}$ fulfill the following properties.
  \label{hypagreg}
  \begin{itemize}
  \item[i.] The specific growth kinetics $\mu_{u}(\cdot)$ and
    $\mu_{v}(\cdot)$ are smooth increasing functions, null at zero,
    that verify:
    \begin{equation}
      \label{hypo_mu}
      \mu_{u}(s) > \mu_{v}(s), \quad \forall s>0
    \end{equation}
  \item[ii.] The removal rates of aggregate and planktonic biomass
    verify:
    \begin{equation}
      \label{hypoD}
      D \geq D_{u} \geq D_{v}>0
    \end{equation}
  \item[iii.] The function $\alpha$ only depends on concentrations $u$ and $v$ in an increasing manner
    and such that:
    \[
    u>0 \; \Rightarrow \alpha(u,0)>0
    \]
    with
    \[
    \frac{\partial\alpha}{\partial u}(u,v) \geq 
    \frac{\partial\alpha}{\partial v}(u,v), \quad \forall (u,v) .
    \]
  \item[iv.] The function $\beta$ depends only on the concentration $v$ in a decreasing manner
    and such that $v\mapsto \beta(v)v$ is increasing with:
    \[
    v>0 \; \Rightarrow \; \beta(v)>0 .
    \]
  \end{itemize}
\end{assumptions}

Typical instances of functions $\mu_{u}$, $\mu_{v}$ are given by
  the Monod expression
\[
\mu_{\max}\,\frac{s}{K_{s}+s}
\]
(with distinct values of the parameters
  $\mu_{\max}$, $K_{s}$ for planktonic and attached bacteria), that is
  quite popular in microbiology. 
  Assumption i. expresses the observation that attached bacteria have
  generally a more difficult
  acces to substrate. 
  With Assumption ii, we first neglect the mortality of planktonic bacteria,
  compared to the removal rate $D$, and considered that the
  substrate is the reactant that is removed most easily because of the
  the size of its molecules (that is usually much smaller that
  micro-organisms, justifying the assumption $D_{u}\leq D$). In a similar way, the
  attachment slows down the effective removal rate of the attached
  bacteria compared to the planktonic ones (which is represented by
  the inequality $D_{v}\leq D_{u}$).
  Typically, it can be considered that the specific attachment velocity $\alpha(u,v)$ can be
decomposed into a sum of two terms $\alpha_{u}(u)$ and $\alpha_{v}(v)$ that reflect the two possible
types of attachments: on free bacteria or on bacteria already in flocs. Considering
that free bacteria mainly attach on the surface of flocs, and that when the size of flocs
increases, the ratio surface over volume does not increase as quickly as the volume,
it can be expected that the function $\alpha_{v}$ increases more slowly than $\alpha_{u}$, which is then
reflected by $\alpha_{u}'(u)\geq \alpha_{v}'(v)$ for all $(u,v)$, justifying Assumption iii.
In general, it is expected that the detachment velocity $v \mapsto \beta(v)v$ increases with
the density $v$ of the attached biomass, but when the flocs size increases, the ratio surface
over volume increases more slowly than the volume, which results in a decrease of the
function $v \mapsto \beta(v)v$, thus justifying Assumption iv.

\section{Study of the coexistence between the two forms}

We assume that
\[
D=D_{u}=D_{v} ,
\]
(the more general case of different removal rates is discussed
in Section \ref{SectionDiffD}), which allows to consider the variable $z(t) = s(t) + x(t)$, a solution
of the differential equation :
\[
\frac{dz}{dt}=D(S_{in}-z) .
\]
whose solutions converge exponentially to $S_{in}$.
Therefore, the system (\ref{chem_attach}) has a cascade structure  in
the $(z,u,v)$ coordinates :
\begin{equation}
\label{3dsys}
\begin{array}{l}
\ds \frac{dz}{dt} = f_{0}(z)\\[3mm]
\ds \frac{du}{dt} = f_{1}(z,u,v) , \; \frac{dv}{dt} = f_{2}(z,u,v)
\end{array}
\end{equation}
and the local stability analysis of its equilibriums is given by the local
stability of the equilibriums of the reduced dynamics :
\begin{equation}
\label{2dsys}
\frac{du}{dt} = f_{1}(S_{in},u,v) , \;
\ds \frac{dv}{dt} = f_{2}(S_{in},u,v)
\end{equation}
The global behavior of the solutions of the system (\ref{3dsys}) is more
delicate to be deduced from the global behavior of the reduced system
(\ref{2dsys}) and relies on the theory of asymptotically autonomous
systems \cite{MST95}. However, we recall the well-known result when the reduced
system (\ref{2dsys})has a unique globally asymptotically stable
equilibrium, that states that any bounded solution of (\ref{3dsys})
converge to the unique equilibrium of (\ref{3dsys}).
We consider in the following the reduced dynamics of
(\ref{chem_attach}) for $z=S_{in}$:
\begin{equation}
  \label{chem_attach_reduit}
  \left\{
  \begin{array}{lll}
    \ds \frac{du}{dt} =  & \mu_{u}(S_{in}-u-v)u-Du
    -\alpha(u,v)u+\beta(v)v\\[4mm]
    \ds \frac{dv}{dt} =  & \mu_{v}(S_{in}-u-v)v-Dv
    +\alpha(u,v)u-\beta(v)v
  \end{array}\right.
\end{equation} 
We study the possible positive steady-states $(u^\star,v^\star)$ of this system, that is to say, the positive
solutions of the system:
\begin{equation}
  \label{sys-equ}
  \left\{
  \begin{array}{l}
    \ds \mu_{u}(S_{in}-u-v)u-Du -\alpha(u,v)u+\beta(v)v=0\\[4mm]
    \ds \mu_{v}(S_{in}-u-v)v-Dv +\alpha(u,v)u-\beta(v)v=0
  \end{array}\right.
\end{equation}
It can be immediately noticed that $u^\star=0$ implies
$\beta(v^\star)v^\star=0$ and $v^\star=0$, 
$\alpha(u^\star,0)u^\star=0$. The assumptions \ref{hypagreg} that we consider on terms $\alpha(\cdot)$ and
$\beta(\cdot)$
then allow us to infer that there is no steady-state where only one of the two forms
would be present.

\subsection{Coexistence steady-state}
\label{sec_coex_agreg}
Adding equations (\ref{sys-equ}), we obtain $(u^\star,v^\star)$
as a solution of the system:
\[
\left\{
\begin{array}{rcrll}
  (\mu_{u}(s)-D)u & + & (\mu_{v}(s)-D)v & = & 0\\
  u & + & v & = & S_{in}-s
\end{array}\right.
\]

Consequently, a coexistence steady-state (if it exists) verifies:
\begin{equation}
  \label{xpxa}
  u^\star=(S_{in}-s^\star)\frac{D-\mu_{v}(s^\star)}{\mu_{u}(s^\star)-\mu_{v}(s^\star)},
  \quad
  v^\star=(S_{in}-s^\star)\frac{\mu_{u}(s^\star)-D}{\mu_{u}(s^\star)-\mu_{v}(s^\star)}
\end{equation}
with $s^\star=S_{in}-u^\star-v^\star$.
According to hypothesis (\ref{hypo_mu}), we obtain the following
necessary condition:
\[
\mu_{u}(s^\star)>D>\mu_{v}(s^\star) .
\]
By defining the break-even concentration by $\lambda_{u}$,
$\lambda_{v}$ for the dilution rate $D$ (that is that verify 
$\mu_{u}(\lambda_{u})=\mu_{v}(\lambda_{v})=D$ with $\lambda_{v}>
\lambda_{u}$, see \cite{SW95,HLRS17}), we deduce
that a coexistence steady-state must verify:
\[
s^\star \in (\lambda_{u},\lambda_{v}) .
\] 
Thus, a necessary condition for the existence of a coexistence
steady-state is:
\begin{equation}
  \label{condlambda}
  \lambda_{u}<S_{in} .
\end{equation}
At this stage, it is difficult to prove the existence of solutions without specifying
attachment and detachment functions $\alpha(\cdot)$
and $\beta(\cdot)$. If we consider that we are only
dealing with flocs of small size, as a first approximation it is possible to assume that
$\alpha$ is a function of $x = u+v$ (that is, functions $\alpha_{u}$
and $\alpha_{v}$ are identical), which will be
chosen as linear (to simplify), and that the function $\beta$ does not
depend of $v$:
\begin{equation}
  \label{alpha_beta_simples}
  \alpha(u,v)=a(u+v)=ax, \quad \beta(v)=b
\end{equation}
where $a$ and $b$ are two positive constants. Thereby, the hypotheses
\ref{hypagreg} are correctly verified.

\begin{proposition}
  \label{propexist}
  For growth functions $\mu_{u}$, $\mu_{v}$ that verify point i) of Assumptions
  \ref{hypagreg} and attachment and detachment functions $\alpha(\cdot)$, $\beta(\cdot)$ of the form (\ref{alpha_beta_simples}), there
  exists a unique coexistence steady-state of system
  (\ref{chem_attach}) if and only if the condition :
  \begin{equation}
    \label{condexist}
    D <\mu_{u}(S_{in})
  \end{equation}
  is verified.
\end{proposition}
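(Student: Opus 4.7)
\medskip

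\noindent\textbf{Plan.} I would treat the two directions separately. The ``only if'' direction is immediate from the analysis preceding the statement: any coexistence equilibrium satisfies $s^\star \in (\lambda_u,\lambda_v)$, hence in particular $\lambda_u < S_{in}$, which by strict monotonicity of $\mu_u$ is equivalent to $D < \mu_u(S_{in})$. The real content is the converse, to which I turn next.

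The first step for the ``if'' direction is to reduce the pair (\ref{sys-equ}) to a single scalar equation in $s^\star$. Formulas (\ref{xpxa}) already express $u^\star$ and $v^\star$ as explicit functions of $s^\star$ via the sum relation $u^\star+v^\star = S_{in}-s^\star$. Substituting these into the first equation of (\ref{sys-equ}) with $\alpha(u,v) = a(S_{in}-s^\star)$ and $\beta = b$, and factoring the nonzero quantity $(S_{in}-s^\star)/(\mu_u(s^\star)-\mu_v(s^\star))$, a short calculation yields the scalar identity
\[
(\mu_u(s) - D)\bigl(D - \mu_v(s) + b\bigr) = a(S_{in}-s)\bigl(D - \mu_v(s)\bigr).
\]
The question then becomes whether this equation admits a unique root in the interval $I = (\lambda_u,\min(\lambda_v,S_{in}))$, which is exactly the set of $s$ for which (\ref{xpxa}) produces $u^\star,v^\star > 0$ (I adopt the convention $\lambda_v = +\infty$ if $\mu_v$ stays below $D$ on all of $(0,S_{in})$).

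The second step is to rewrite the identity as $F(s) = G(s)$ with
\[
F(s) = \frac{\mu_u(s)-D}{S_{in}-s}, \qquad G(s) = \frac{a\bigl(D - \mu_v(s)\bigr)}{D - \mu_v(s) + b} = a - \frac{ab}{D - \mu_v(s) + b},
\]
and to read off from these expressions that $F$ is strictly increasing on $I$ with $F(\lambda_u)=0$ and $F(s)\to +\infty$ as $s\to S_{in}^-$, while $G$ is strictly decreasing on $I$ (transparent from the second form, since $\mu_v$ is increasing) with $G(\lambda_u)>0$. Hence $F-G$ is strictly increasing, which gives uniqueness. For existence, the signs at the endpoints of $I$ satisfy $F-G<0$ at $\lambda_u^+$ and $F-G>0$ at the right endpoint, so the intermediate value theorem provides exactly one root $s^\star\in I$; the coexistence steady-state is then reconstructed via (\ref{xpxa}).

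The main obstacle I anticipate is not the monotonicity, which is essentially free once the right reformulation is in hand, but the bookkeeping around the right endpoint of $I$: one has to distinguish the case $\lambda_v < S_{in}$, where the sign flip of $F-G$ comes from $G(\lambda_v)=0$ while $F(\lambda_v)>0$, from the case $\lambda_v \geq S_{in}$ (or $\lambda_v$ undefined), where instead $G$ stays bounded and $F\to+\infty$ as $s\to S_{in}^-$. Once this case split is set out cleanly, the rest is a direct computation from the algebraic form of the reduction.
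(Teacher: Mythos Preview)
Your proof is correct and follows the same strategy as the paper: reduce the equilibrium system to a single scalar equation in $s^\star$ and conclude by monotonicity plus the intermediate value theorem on $(\lambda_u,\lambda_v)$. The only cosmetic difference is the rearrangement---the paper writes the same identity as $D(S_{in}-s)=H(s)$ with $H(s)=D\varphi_u(s)(\varphi_v(s)-b)/(a\varphi_v(s))$ strictly increasing from $0$ to $+\infty$ on $(\lambda_u,\lambda_v)$, which sidesteps your endpoint case split, whereas your $F=G$ formulation makes the monotonicity of $F-G$ transparent without computing a derivative.
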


\begin{proof}
As mentioned previously, it is enough to show the existence of
  a positive equilibrium of the reduced dynamics (\ref{chem_attach_reduit}).
  $I$ denotes the interval :
  $$I= ]\lambda_u,\lambda_v[.$$
  To simplify the writing, the following notations are introduced:
  $$
  \varphi_u(s)=\mu_u(s)-D \quad \mbox{and} \quad \varphi_v(s)=\mu_v(s)-D.
  $$
  For all  $s\in I$, we have  $\varphi_u(s)>0>\varphi_v(s)$. The steady-states $(s^*,u^*,v^*)$ are
  given by:
  \begin{eqnarray}                       \label{IsoFlocGen}
    \left\{
    \begin{array}{lll}
      0=\varphi_u(s^*)u^*-a(u^*+v^*)u^*+b v^*\\[1mm]
      0=\varphi_v(s^*)v^*+a(u^*+v^*)u^*-b v^*.
    \end{array}
  \right.
\end{eqnarray}
If $u^*=0$ then, from the first equation, it can be deduced
that $v^*=0$. Similarly, if $v^*=0$ then, from the second equation it can be deduced that
$u^*=0$. Consequently, the steady-states are the washout $E_0=(S_{in},0,0)$ or a
steady-state of the form:
$$E^*=(s^*,u^*,v^*)$$
with $u^*>0$ , $v^*>0$ and $s^*=S_{in}-u^*-v^*$.
In order to solve Equations (\ref{IsoFlocGen}), one uses a method
similar to the characteristic at steady-state method. This
method consists in determining the steady-states of the system formed by the 2nd
and 3rd equations of (\ref{chem_attach}), where the variable $s$ is considered to be an input of the
system. In other words the aim is to solve the system formed by the first and the
second equation of (\ref{IsoFlocGen}), in which $u^*$ and $v^*$ are the unknowns and $s^*$ is considered
as being a parameter. It thus yields :
$$u^*=U(s^*),\qquad v^*=V(s^*).$$ 
If $u^*$ and $v^*$ are replaced by these expressions in the first equation of (\ref{chem_attach}), an
equation of the single variable $s^*$ is obtained of the form:
$$D(S_{in}-s^*)=H(s^*)\quad\mbox{with}\quad
H(s^*)=\mu_u(s^*)U(s^*)+\mu_v(s^*)V(s^*) $$
that is solved, see Figure \ref{figsol}, to find a positive solution $s^*$ . This solution gives a
positive steady-state, provided that $U(s^*)$ and $V(s^*)$ be positive. In the following, the
functions U, V and H are determined and the conditions are given in order for the
solution $s^*$ to exist.
\begin{figure}[ht]
  \setlength{\unitlength}{1.0cm}
  \begin{center}
    \begin{picture}(6.7,6)(0,0)
      \put(0,6.5){\rotatebox{-90}{\includegraphics[width=6cm,height=8cm]{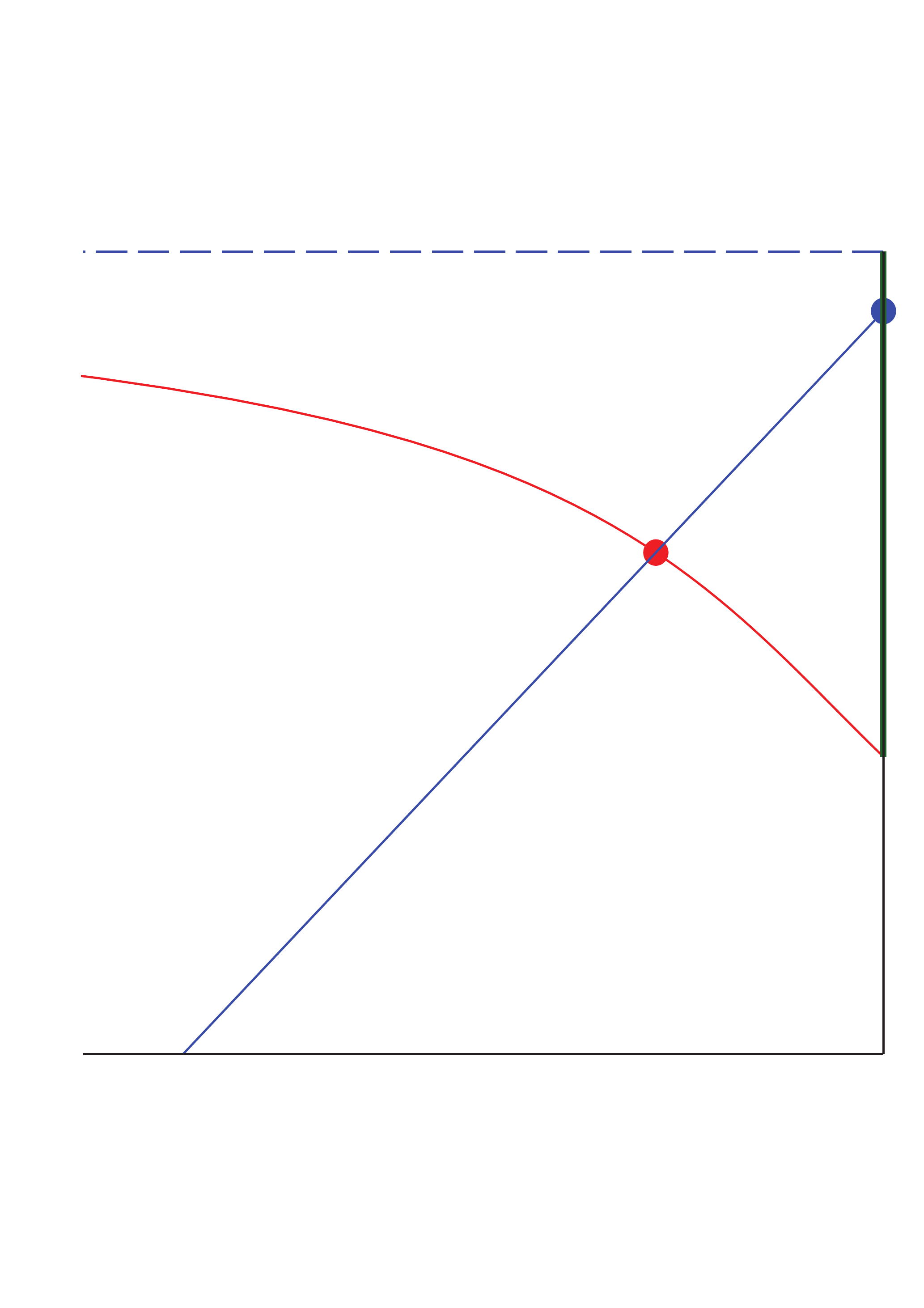}}}
      \put(5.5,4.7){{ ${H(s)}$}}
      \put(2.9,3.9){{ ${D(S_{in}-s)}$}}
      \put(0.5,5.2){{ $DS_{in}$}}
      \put(4.7,2.2){{ ${E^*}$}}
      \put(5.9,0.9){{ ${E_0}$}}
      \put(5.7,0.4){{ $S_{in}$}}
      \put(3.1,0.4){{ ${\lambda_u}$}}
      \put(6.3,0.4){{ ${\lambda_v}$}}
      \put(6.5,0.7){{ $s$}}
    \end{picture}
  \end{center}
  \caption{Existence of a unique positive steady-state.} \label{figsol}
\end{figure}

By summing the 1st and 2nd equations (\ref{IsoFlocGen}), we obtain:
\begin{equation}                         \label{eqPhiPsi}
  \varphi_u(s^*)u^* + \varphi_v(s^*)v^*=0.
\end{equation}

This equation admits a positive solution if and only if $\varphi_u(s^*)$ and 
$\varphi_v(s^*)$ are of
opposite signs, that is, if and only if $s^*\in I$. If this equation admits a solution in this
interval then Equation (\ref{eqPhiPsi}) can be written as follows :
\begin{equation}                           \label{eqV-U}
  v^*=-\frac{\varphi_u(s^*)}{\varphi_v(s^*)}u^*.
\end{equation}
By replacing $v^*$ by Expression (\ref{eqV-U}) in the first equation
of (\ref{IsoFlocGen}), it yields:
\begin{equation}                              \label{equ}
  u^*=U(s^*) \quad \mbox{with} \quad 
  U(s)=\frac{\varphi_u(s)(\varphi_v(s)-b)}{a[\varphi_v(s)-\varphi_u(s)]}.
\end{equation}
Note that $u^*$ defined by (\ref{equ}) is positive because $s^*\in
I$. By replacing $u^*$ by (\ref{equ}) in (\ref{eqV-U}), we get:
\begin{equation}                              \label{eqv}
  v^*=V(s^*) \quad \mbox{with} \quad 
  V(s)=-\frac{\varphi_u^2(s)(\varphi_v(s)-b)}{a[\varphi_v(s)-\varphi_u(s)]\varphi_v(s)}.
\end{equation}
Substituting the expressions of $U(s^*)$ and $V(s^*)$ given by
(\ref{equ}) and (\ref{eqv}) in
the expression of  $H(s^*)$ yields a characterization of $s^*$:
\begin{equation}                           \label{eqH(S)}
  D(S_{in}-s^*)=H(s^*)\quad \mbox{with} \quad
  H(s)=D\frac{\varphi_u(s)(\varphi_v(s)-b)}{a\varphi_v(s)} .
\end{equation}
Note that for all $s\in I$ , $U(s)>0$, $V(s)>0$ and $H(s)>0$  and that :
$$\lim_{s\to\lambda_u}H(s)=0,\quad \lim_{s\to\lambda_v}H(s)=+\infty .$$
In addition, function $H$ is strictly increasing on $I$. Indeed, we have:
$$H'(s)=\frac{D}{a}
\frac{\varphi_v(s)(\varphi_v(s)-b)\varphi_u'(s)+b\varphi_u(s)\varphi_v'(s)}{\varphi_v^2(s)}>0 .$$
Consequently, Equation (\ref{eqH(S)}) admits a unique solution 
$s^*\in I=]\lambda_u,\lambda_v[$ if and
only if $S_{in}>\lambda_u$, which is equivalent to $\mu_u(S_{in})>D$.
\end{proof}

\subsection{Study of stability}
Under the conditions of stability and global attractiveness of the
washout steady-state of the chemostat model in which only the
planktonic biomass would be considered (see \cite{SW95,HLRS17}):
\begin{equation}
  \label{condlessivage}
  D \geq \mu_{u}(S_{in})
\end{equation}
one can easily check that the washout $(S_{in},0,0)$ is also the only steady-state of the
system (\ref{chem_attach}), stable
and globally attractive. As a matter of fact, by considering the reduced model (\ref{chem_attach_reduit}),
under this assumption we have:
\[
x \in ]0,S_{in}] \; \Rightarrow \;
\frac{dx}{dt}=(\mu_{u}(S_{in}-x)-D)u+(\mu_{v}(S_{in}-x)-D)v
<0
\]
which demonstrates that $x(\cdot)$ asymptotically converges towards
$0$ for any initial condition. 
As any solution of system 
(\ref{chem_attach}) is bounded, we deduce that it converges to the washout equilibrium.
According to the study conducted in Section \ref{sec_coex_agreg}, a positive steady-state exists
as soon as the condition (\ref{condexist}) is verified and is unique. By particularizing the attachment
and detachment functions as we did in Section \ref{sec_coex_agreg}, the following stability
result is obtained (the case in which $D_{u}$ and $D_{v}$ are
different from $D$ is addressed in \cite{FHCRS13}).
\begin{proposition}
  Under the assumptions of Proposition \ref{propexist} the coexistence
  steady-state is a locally exponentially stable of system (\ref{chem_attach}).
\end{proposition}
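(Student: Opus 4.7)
The plan is to reduce the stability analysis to the $2\times 2$ Jacobian of the planar reduced system (\ref{chem_attach_reduit}) and apply the Routh--Hurwitz criterion. In the $(z,u,v)$ coordinates with $z=s+u+v$, the system has the cascade structure (\ref{3dsys}), and the decoupled linear equation $\dot z=D(S_{in}-z)$ contributes a single eigenvalue $-D<0$. Consequently, the Jacobian of the full three-dimensional dynamics at the coexistence point is block lower-triangular in these coordinates, and its spectrum is $\{-D\}$ together with that of the $2\times 2$ Jacobian $J$ of (\ref{chem_attach_reduit}) at $(u^\star,v^\star)$. It therefore suffices to prove $\mathrm{tr}(J)<0$ and $\det(J)>0$.

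Next I would compute the four entries of $J$ and simplify them using the equilibrium identities (\ref{sys-equ}) specialized to $\alpha(u,v)=a(u+v)$ and $\beta(v)=b$. Writing $\varphi_u(s)=\mu_u(s)-D$ and $\varphi_v(s)=\mu_v(s)-D$, the first identity rearranges to $\varphi_u(s^\star)=ax^\star-bv^\star/u^\star$ and the second to $\varphi_v(s^\star)=b-ax^\star u^\star/v^\star$. Substituting these into the diagonal entries of $J$ cancels the $\varphi$ factors and yields
\[
J_{11}=-\mu_u'(s^\star)u^\star-\frac{bv^\star}{u^\star}-au^\star,\qquad J_{22}=-\mu_v'(s^\star)v^\star-\frac{a(u^\star)^2}{v^\star},
\]
both manifestly negative by point i.\ of Assumptions \ref{hypagreg} and positivity of $a,b,u^\star,v^\star$, so $\mathrm{tr}(J)<0$ is immediate.

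The main step is then $\det(J)>0$. Expanding $J_{11}J_{22}-J_{12}J_{21}$ and collecting, I expect the result to split into a sum of obviously positive contributions (those containing the factors $\mu_u'(s^\star)u^\star$, $\mu_v'(s^\star)v^\star$, or products of $a$ and $b$ with strictly positive quantities) together with the single group
\[
ax^\star\left(\frac{au^\star x^\star}{v^\star}-b\right),
\]
whose sign is not evident from positivity alone. The key observation is that by the second equilibrium equation, $au^\star x^\star/v^\star-b=-\varphi_v(s^\star)$, and this is strictly positive because $s^\star\in(\lambda_u,\lambda_v)$ by the proof of Proposition \ref{propexist}. Hence every term in $\det(J)$ is positive, and Routh--Hurwitz gives local exponential stability of $(u^\star,v^\star)$ in the plane, which lifts to local exponential stability of $(s^\star,u^\star,v^\star)$ for the full system via the cascade.

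The main obstacle will be the bookkeeping of the determinant expansion, and in particular recognizing that the only potentially negative contribution $-abx^\star$ combines with the quadratic-in-$a$ term $a^2u^\star(x^\star)^2/v^\star$ to reconstruct a positive multiple of $-\varphi_v(s^\star)>0$, thereby borrowing its sign from the existence regime $\mu_v(s^\star)<D$.
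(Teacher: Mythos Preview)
Your proposal is correct and follows essentially the same route as the paper: reduce via the cascade to the planar system, then verify the Routh--Hurwitz conditions on the $2\times 2$ Jacobian using the equilibrium identities. The paper organizes the determinant computation slightly differently---it keeps $\varphi_u,\varphi_v$ in the Jacobian entries and writes $\det J^*=A\,u^*\varphi_u'(s^*)+B\,v^*\varphi_v'(s^*)+C$, showing $A=a(u^*+v^*)^2/v^*>0$, $B=b(u^*+v^*)/u^*>0$, and $C=-\varphi_u(s^*)(\varphi_v(s^*)-b)>0$---but these are exactly your coefficients, and via $\varphi_u(s^*)u^*=-\varphi_v(s^*)v^*$ one sees the paper's $C$ equals your $a x^\star(-\varphi_v(s^\star))$, so the crucial sign argument is identical.
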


\begin{proof}
As mentioned previously, it is enough to study the local
  stability for the reduced dynamics (\ref{chem_attach_reduit}).
  The Jacobian matrix of (\ref{chem_attach_reduit}) for the steady-state $(u^*,v^*)$, which corresponds
  to the positive equilibrium $E^*=(s^*,u^*,v^*)$ of (\ref{chem_attach}), is equal to:
  $$
  J^*=
  \left[
    \begin{array}{ll}
      -u^*\varphi_u'(s^*)+\varphi_u(s^*)-a(2u^*+v^*)  & -u^*\varphi_u'(s^*)-au^*+b  \\[2mm]
      -v^*\varphi_v'(s^*)+a(2u^*+v^*)  &-v^*\varphi_v'(s^*)+\varphi_v(s^*)+au^*-b 
    \end{array}
  \right]
  $$
  The trace of this matrix is equal to:
  $${\rm Tr}J^*=-u^*\varphi_u'(s^*)-v^*\varphi_v'(s^*)+\varphi_u(s^*)-a(u^*+v^*)+\varphi_v(s^*)-b$$
  Note that based on Equations (\ref{IsoFlocGen}), it can be deduced
  that:
  \begin{equation}
    \varphi_u(s^*)-a(u^*+v^*)=-b\frac{v^*}{u^*}<0,\qquad \varphi_v(s^*)-b=-a\frac{(u^*+v^*)u^*}{v^*}<0
    \label{equv}
  \end{equation}
  Further, as $\varphi_u'(s^*)>0$ and $\varphi_v'(s^*)>0$,
  it can be deduced that ${\rm Tr}J^*<0$. 
  The determinant of this matrix is equal to:
  $${\rm Det}J^*=Au^*\varphi_u'(s^*)+Bv^*\varphi_v'(s^*)+C$$
  with:
  $$
  A=a(u^*+v^*)+b-\varphi_v(s^*),
  \quad
  B=a(u^*+v^*)+b-\varphi_u(s^*),
  $$
  and:
  $$
  C=\varphi_u(s^*)\varphi_v(s^*)+\varphi_u(s^*)(au^*-b)-\varphi_v(s^*)a(2u^*+v^*)
  $$
  By using Expressions (\ref{equv}), it yields that:
  $$
  A=a\frac{(u^*+v^*)^2}{v^*}>0,
  \quad
  B=b\frac{u^*+v^*}{u^*}>0
  $$
  Moreover, we have:
  $$
  C=\varphi_u(s^*)\left(\varphi_v(s^*)-b\right)+a\left(u^*\varphi_u(s^*)-v^*\varphi_v(s^*)\right)-2au^*\varphi_v(s^*)
  $$
  Utilizing (\ref{eqPhiPsi}), we get:
  $$
  C=\varphi_u(s^*)\left(\varphi_v(s^*)-b\right)+
  2au^*\varphi_u(s^*)-2au^*\varphi_v(s^*)
  $$
  Utilizing (\ref{equv}), we have:
  $$
  au^*\left(\varphi_u(s^*)-\varphi_v(s^*)\right)=-\varphi_u(s^*)\left(\varphi_v(s^*)-b\right)$$
  Consequently:
  $$
  C=-\varphi_u(s^*)\left(\varphi_v(s^*)-b\right)>0
  $$
  Thereof, it can be deduced that ${\rm Det}J^*>0$, and as a
  consequence, the real parts of the eigenvalues of $J^*$ are strictly negative.
\end{proof} 

\section{The case of fast attachments/detachments}
\label{SecFlocsLentRapide}

Depending on species and on hydrodynamic conditions, attachment and detachment
velocities may prove to be large compared to growth kinetics and to dilution
rate. In this case, it is possible to consider that the attachment and detachment terms,
$\alpha(\cdot)$ and $\beta(\cdot)$ respectively, can be rewritten in
the form:
\[
\frac{\alpha(\cdot)}{\varepsilon}, \quad
\frac{\beta(\cdot)}{\varepsilon}
\]
where $\varepsilon$ is a positive number supposed to be small, and
functions $\alpha(\cdot)$, $\beta(\cdot)$ verify the
same assumptions \ref{hypagreg}. Thus, the model (\ref{chem_attach}) is written as:
\begin{equation}
  \label{chem_attachLR}
  \left\{
  \begin{array}{lll}
    \ds \frac{ds}{dt} & = & \ds D(S_{in}-s)-\mu_{u}(s)u-\mu_{v}(s)v\\[4mm]
    \ds \frac{du}{dt} & =  & \ds \mu_{u}(s)u-Du
    -\frac{1}{\epsilon}\left(\alpha(u,v)u-\beta(v)v\right)\\[4mm]
    \ds \frac{dv}{dt} & =  & \ds \mu_{v}(s)v-Dv
    +\frac{1}{\epsilon}\left(\alpha(u,v)u-\beta(v)v\right)
  \end{array}\right.
\end{equation}
It is convenient to write this dynamic by replacing the variables $u$
and $v$ by $x=u+v$ and $p=u/x$
\begin{equation}
  \label{chem_attach_xp}
  \left\{
  \begin{array}{lll}
    \ds \frac{ds}{dt} & =  & \ds D(S_{in}-s)-\bar\mu(s,p)x\\[4mm]
    \ds \frac{dx}{dt} & =  & \ds \bar\mu(s,p)x-Dx\\[4mm]
    \ds \frac{dp}{dt} & =  & \ds
    \left(\mu_{u}(s)-\mu_{v}(s)\right)p(1-p)-\frac{1}{\epsilon}\left(\alpha(px,(1-p)x)p-\beta((1-p)x)(1-p)\right)
  \end{array}\right.
\end{equation}
by defining:
\[
\bar\mu(s,p):=p\,\mu_{u}(s)+(1-p)\,\mu_{v}(s)   .
\]
Observe that this dynamic system is of the form:
\[
\left\{
\begin{array}{lll}
  \ds \frac{ds}{dt} & =  & \ds f_{s}(s,x,p)\\[4mm]
  \ds \frac{dx}{dt} & =  & \ds f_{x}(s,x,p)\\[4mm]
  \ds \frac{dp}{dt} & =  & \ds \frac{1}{\epsilon}\left[\epsilon f_{p}(s,p)+g(x,p)\right]
\end{array}\right.
\]
where we posit:
\[
g(x,p):=-\alpha(px,(1-p)x)p+\beta((1-p)x)(1-p) .
\]
When $\epsilon$ is small and the terms $f_{s}(s,x,p)$, $f_{x}(s,x,p)$
and $\epsilon f_{p}(s,p)+g(x,p)$ are
of the same order of magnitude, the velocity $\frac{dp}{dt}$
is then very large compared to velocities $\frac{ds}{dt}$, $\frac{dx}{dt}$. 
Variables $s$ and $x$ can then be considered as almost constant and the
approximation of the dynamics of variable $p$ as "fast":
\begin{equation}
  \label{dyn_p_reduit}
  \frac{dp}{dt} = \frac{1}{\epsilon}g(x,p)
\end{equation}
where $s$ is considered as a constant parameter (the term $\epsilon f_{p}(s,p)$ being negligible
with regard to $g(x,p)$). If for any $x$, the differential equation (\ref{dyn_p_reduit}) admits a unique
steady-state $\bar p(x)$, then this expression can be carried to
the system (\ref{chem_attach_xp}) to obtain the "slow" approximation of the dynamics of the variables
$s$ and $x$:
\begin{equation}
  \label{chem_attach_red}
  \left\{
  \begin{array}{lll}
    \ds \frac{ds}{dt} & =  & \ds D(S_{in}-s)-\mu(s,x)x\\[4mm]
    \ds \frac{dx}{dt} & =  & \ds \mu(s,x)x-Dx
  \end{array}\right.
\end{equation}
by defining:
\[
\mu(s,x)=\bar\mu(s,\bar p(x)) .
\]
This reduction technique (which consists in replacing $\epsilon$ by $0$) is well known in
physics under the name of quasi-steady state approximation method. At the mathematical
level, the rigorous proof of the convergence of the solutions of the system
(\ref{chem_attach_xp}) towards those of the reduced system (\ref{chem_attach_red}) makes use of the theory of singular
perturbations (see for instance \cite{K96}).
When the slow manifold is globally attractive, that is when $\bar
p(x)$ is a globally asymptotically stable of the dynamics
$dp/d\tau=g(x,p)$ for any fixed $x>0$ (where $\tau=t/\epsilon$ is the
``fast'' time), then Tikhonov's Theorem
applies.
Recall that this Theorem asserts that for any initial condition of 
(\ref{chem_attach_xp}) with $x(0)>0$ and any time interval $[0,T]$ with $T>0$,
the solution $s(\cdot)$, $x(\cdot)$ of (\ref{chem_attach_xp}) converge uniformly on
$[0,T]$ to the solution of (\ref{chem_attach_red}). Furthermore, when the solution of
the reduced dynamics (\ref{chem_attach_red}) converges to an asymptotically stable
equilibrium, then one can take $T=+\infty$ (see for instance \cite{LST98}).
The Proposition below shows that the existence and the global asymptotic
stability of the slow manifold, under
Assumptions \ref{hypagreg}. 

\begin{proposition}
\label{propbarp}
  Under Assumptions \ref{hypagreg}, there exists a unique function 
  $\bar p: \Rset_{+} \mapsto [0,1]$ $C^1$, strictly decreasing, such
  that $g(x,\bar p(x))=0$ for all $x>0$. In
  addition, $\bar p(x)$ is the unique globally asymptotically stable
  steady-state of the scalar equation (\ref{dyn_p_reduit}), for all $x>0$.
\end{proposition}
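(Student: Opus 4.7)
The plan is to treat $\bar p(x)$ as the implicit solution of $g(x,p)=0$ for fixed $x>0$ and then analyze the scalar ODE. First I would check the endpoint signs: $g(x,0)=\beta(x)>0$ by Assumption iv and $g(x,1)=-\alpha(x,0)<0$ by Assumption iii, so by the intermediate value theorem there is at least one zero $\bar p(x)\in(0,1)$.

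For uniqueness, smoothness, and monotonicity of $\bar p$, I would establish strict signs of the partials of $g$. Writing $u=px$, $v=(1-p)x$, and abbreviating the partials of $\alpha$ by $\alpha_u$, $\alpha_v$, a direct computation gives
\[
\partial_p g(x,p) = -\bigl[\alpha(u,v) + u\,(\alpha_u-\alpha_v)(u,v)\bigr] - (v\beta(v))'.
\]
By Assumption iii both $\alpha$ and $u(\alpha_u-\alpha_v)$ are non-negative, and by Assumption iv the derivative $(v\beta(v))'$ is non-negative. The first bracket is moreover strictly positive whenever $u>0$, since then $\alpha(u,v)\ge\alpha(u,0)>0$, so $\partial_p g<0$ throughout $(0,1)\times(0,\infty)$. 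Uniqueness of the zero is then immediate, and the implicit function theorem upgrades $\bar p$ to a $C^1$ function. A parallel calculation at fixed $p$ yields
\[
\partial_x g(x,p) = -p\bigl[p\,\alpha_u + (1-p)\,\alpha_v\bigr] + (1-p)^2\,\beta'\bigl((1-p)x\bigr),
\]
again a sum of two non-positive terms; along the graph $p=\bar p(x)\in(0,1)$, the second term is strictly negative because $\beta$ is strictly decreasing, and hence $\bar p'(x) = -\partial_x g/\partial_p g<0$.

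For the stability claim on $dp/d\tau=g(x,p)$ with $x>0$ fixed, the endpoint signs force $[0,1]$ to be forward invariant, and the strict monotonicity of $g(x,\cdot)$ together with the unique interior zero gives $g>0$ on $(0,\bar p(x))$ and $g<0$ on $(\bar p(x),1)$. Every trajectory in $[0,1]$ is therefore monotone and converges to $\bar p(x)$, which is globally asymptotically stable. The main obstacle is extracting the strict negativity of $\partial_p g$ in a clean form: the right grouping of the $\alpha$- and $\beta$-contributions is what makes the otherwise opaque clauses of Assumptions iii and iv---namely $\alpha_u\ge\alpha_v$ and the monotonicity of $v\mapsto v\beta(v)$---collapse into the non-negativity of a single bracket and of the derivative $(v\beta(v))'$, and without this grouping the sign does not come out.
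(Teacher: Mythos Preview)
Your argument follows the paper's proof essentially line by line: endpoint signs of $g(x,\cdot)$ give existence via the intermediate value theorem, the sign of $\partial_p g$ gives uniqueness and (with the implicit function theorem) smoothness, and the sign of $\partial_x g$ together with $\bar p'=-\partial_x g/\partial_p g$ gives strict monotonicity; the scalar stability follows from the same monotonicity of $p\mapsto g(x,p)$.

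One point to correct: you attribute the strict inequality $\partial_x g<0$ to the term $(1-p)^2\beta'((1-p)x)$, arguing that $\beta$ is \emph{strictly} decreasing. Assumption~iv only asks that $\beta$ be decreasing, and indeed the paper's own canonical choice $\beta(v)\equiv b$ has $\beta'\equiv 0$, so this term vanishes there. The strictness (when it holds) has to come from the $\alpha$-contribution $-p[p\,\alpha_u+(1-p)\,\alpha_v]$; in the example $\alpha(u,v)=a(u+v)$ one has $\alpha_u=\alpha_v=a>0$ and this term is strictly negative for $p\in(0,1)$. The paper itself simply asserts $\partial_x g<0$ without isolating which term supplies the strictness, so your proof is no less complete than the original once you redirect the justification to the $\alpha$-term.
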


\begin{proof}
  For any $x>0$, we have $g(x,0)=\beta(x)>0$ and $g(x,1)=-\alpha(x,0)<0$
  (following Assumptions \ref{hypagreg}). According to the intermediate value theorem,
  there therefore exists $\bar p(x) \in ]0,1[$ such that $g(x,\bar p(x))=0$. Let us determine the
  partial derivatives of the function $g$:
  \[
  \begin{array}{lll}
    \ds \frac{\partial g}{\partial x} & = &
    \ds
    -\left[\left(\frac{\partial \alpha}{\partial
          u}(u,v)p+\frac{\partial\alpha}{\partial v}(u,v)(1-p)\right)p-\beta'(v)(1-p)^2\right]_{u=px,v=(1-p)x}\\[5mm]
    \ds \frac{\partial g}{\partial p} & = &
    \ds -\left[\left(\frac{\partial\alpha}{\partial
          u}(u,v)-\frac{\partial\alpha}{\partial v}(u,v)\right)u+\alpha(u,v)+\frac{1}{u+v}\frac{d}{dv}(\beta(v)v)\right]_{u=px,v=(1-p)x}
  \end{array}
  \]
  For $x>0$, Assumptions \ref{hypagreg} guarantee $\frac{\partial
    g}{\partial x}<0$ and $\frac{\partial g}{\partial p}<0$.
  Thus, the function
  $p \mapsto g(x,p)$  is strictly decreasing, guaranteeing the uniqueness of the solution $\bar p(x)$
  of $g(x,p)=0$. According to the implicit function theorem, the function $\bar p$ is also
  differentiable for any $x>0$ and its derivative is written as:
  \[
  \bar p'(x)=-\frac{\ds\frac{\partial g}{\partial x}(x,\bar p(x))}{\ds\frac{\partial g}{\partial
      p}(x,\bar p(x))} <0 .
  \]
  The function $\bar p$ is thus $C^1$ on $\Rset_{+}\setminus\{0\}$ and strictly decreasing.
  Thereby, for all fixed $x>0$, $\bar p(x)$ is the unique steady-state of the differential
  equation (\ref{dyn_p_reduit}), and since $\frac{\partial g}{\partial
    p}<0$ for every $(x,p)$, it can be thereof deduced that the
  steady-state $\bar p(x)$ is globally asymptotically stable for the
  scalar dynamics (\ref{dyn_p_reduit}).
\end{proof}

For instance, for functions considered in (\ref{alpha_beta_simples}),
we get:
\begin{equation}
\label{exp}
\bar p(x)=\frac{1}{\ds 1+\frac{a}{b}x} .
\end{equation}

Figure \ref{figSF} presents simulations with functions
(\ref{alpha_beta_simples}) and compares the solutions (in plain line)
of the original system (\ref{chem_attach_xp}) with the ones (in
dashed line) of the reduced dynamics (\ref{chem_attach_red}).
It shows that the slow-fast approximation is good even for value of
$\epsilon$ that are not so small.

\begin{figure}[h!]
\begin{center}
\begin{tabular}{ll}
\begin{minipage}{6cm}
\includegraphics[width=7cm,height=6cm]{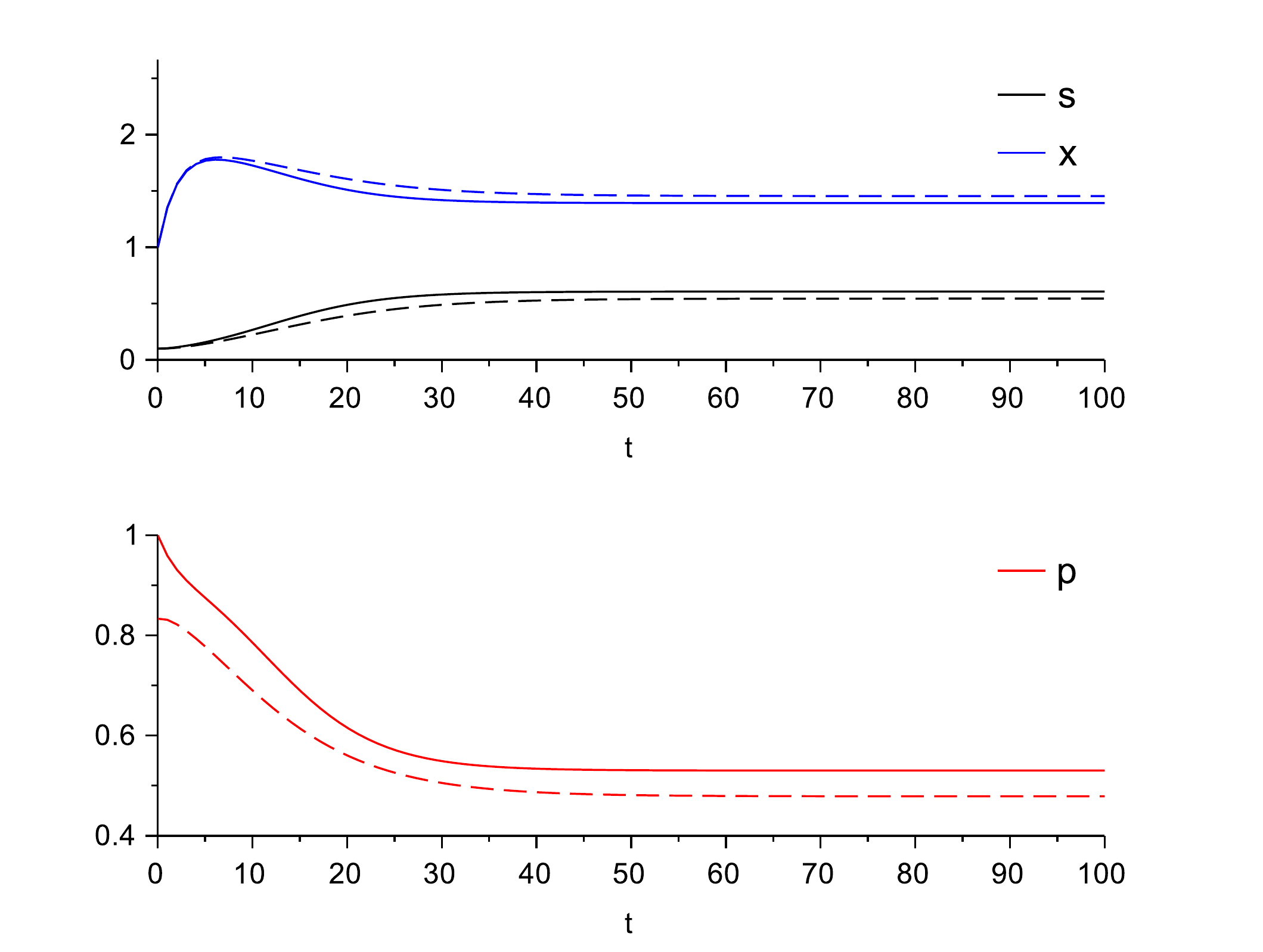} 
\end{minipage}
&
\begin{minipage}{6cm}
\includegraphics[width=7cm,height=6cm]{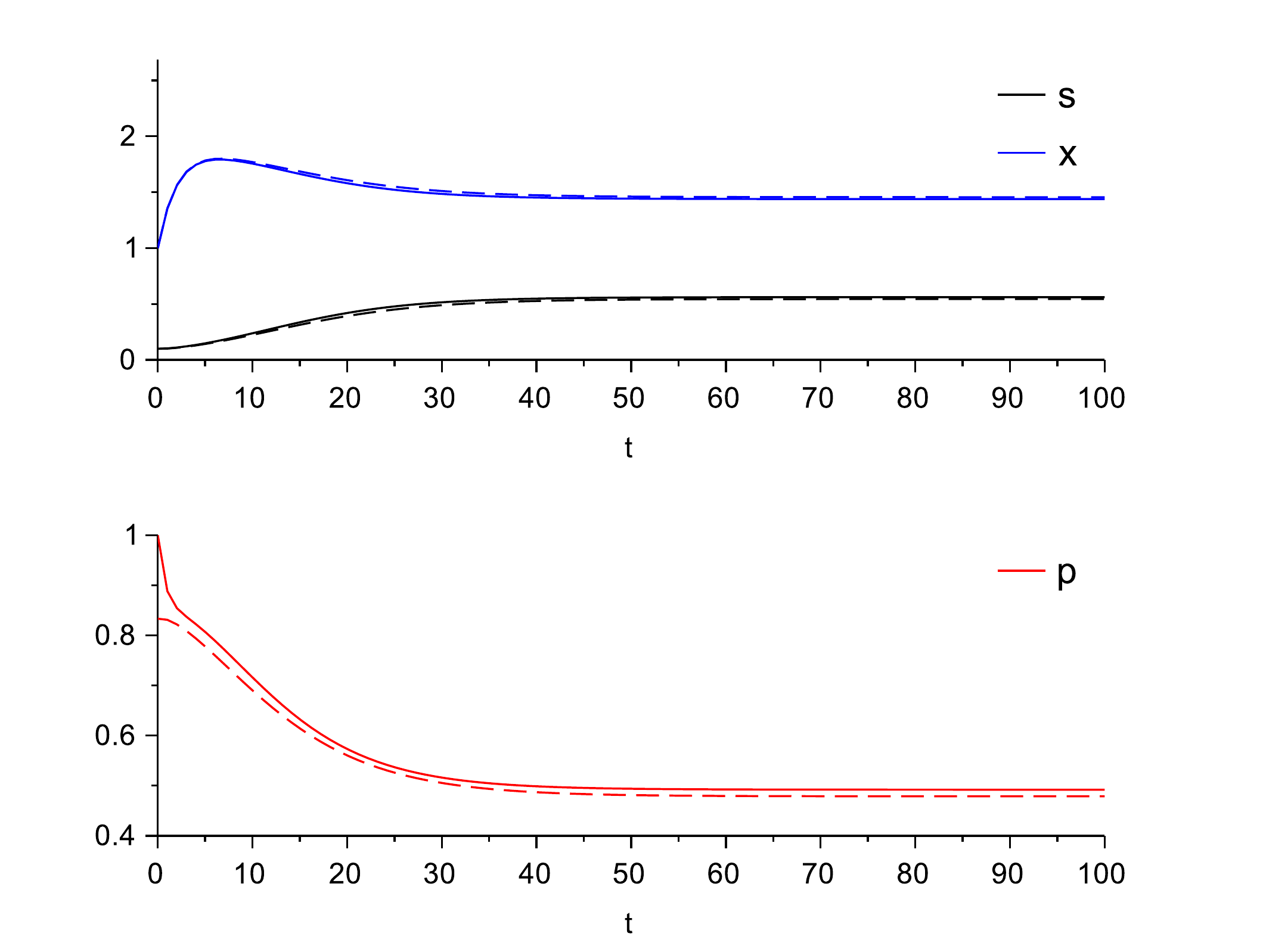}
\end{minipage}
\end{tabular}
\end{center}
\caption{\label{figSF}
Simulations for $\mu_{u}(s)=\frac{s}{1+s}$,
$\mu_{v}(s)=\frac{0.7s}{1+s}$, $S_{in}=2$, $D=0.5$, $a=1$, $b=0.5$
with $\epsilon=2$ (left) and $\epsilon=0.5$ (right)
}
\end{figure}

\begin{remark}
  Thanks to Assumptions \ref{hypagreg}, it yields that:
  \[
  \frac{\partial \mu}{\partial x}(s,x)= \frac{\partial \bar\mu}{\partial
    p}(s,p)\vert_{p=\bar p(x)}.\bar p'(x)=(\mu_{u}(s)-\mu_{v}(s)).\bar p'(x)<0
  \]
  and thus the model (\ref{chem_attach_red}) for the total biomass $x$
  has a density-dependent growth, decreasing with respect $x$.
\end{remark}
\subsection{Consideration of several species}
When several species are in competition, we can similarly decompose the biomass
of each species $i$ into planktonic biomass $u_{i}$ and attached biomass $v_{i}$ (without differentiating
the composition of flocs which can mix individuals from different
species):
\[
  \left\{
  \begin{array}{lll}
    \ds \frac{ds}{dt} = & \ds D(S_{in}-s)-\sum_{j=1}^n\mu_{u_{j}}(s)u_{j}-\sum_{j=1}^n\mu_{v_{j}}(s)v_{j}\\[4mm]
    \ds \frac{du_{i}}{dt} =  & \mu_{u_{i}}(s)u_{i}-Du_{i}
-\alpha_{i}(u_1,\cdots,u_{n},v_{1},\cdots,v_{n})u_{i}+\beta(v_{1},\cdots,v_{n})v_{i}\\
& & \hfill (i=1\cdots n)\\
    \ds \frac{dv_{i}}{dt} =  & \mu_{v_{i}}(s)v_{i}-Dv_{i}
+\alpha_{i}(u_1,\cdots,u_{n},v_{1},\cdots,v_{n})u_{i}-\beta(v_{1},\cdots,v_{n})v_{i}
\end{array}\right.
\]
The specific attachment functions $\alpha_{i}$ then depend (a priori) on all others quantities $u_{j}$, $v_{j}$  since a free individual of species $i$ can attach to free biomass or biomass with any
species attached. Analogously, the specific detachment functions $\beta_{i}$ depend a priori
on all quantities $v_{j}$ of biomass attached where an individual $i$
could have attached.
To simplify, it will be possible, for example, to assume that
the $\alpha_{i}$ are functions of the total
planktonic and attached biomass $u=\sum_{j}u_{j}$ and $v=\sum_{v}v_{j}$,
and the $\beta_{i}$ functions
of $v$ only, with the same Assumptions (\ref{hypagreg}). The combinatorics of the possible specific
cases makes the mathematical study much more complicated, but when the attachment
and detachment velocities can be considered to be fast, the quasi-steady state approximation
makes it possible to write a dynamic system for biomass $x_{i}=u_{i}+v_{i}$ by
expressing the terms $u_{i}$ and $v_{i}$ according to all the $x_{j}$ on the "slow" manifold defined by
the system of equations:
\[
\alpha_{i}(u_1,\cdots,u_{n},v_{1},\cdots,v_{n})u_{i}-\beta_{i}(v_{1},\cdots,v_{n})v_{i}=0
\qquad i=1\cdots n .
\]
For example, by considering simple functions like we did in
(\ref{alpha_beta_simples}):
\[
\alpha_{i}(x_{1},\cdots,x_{n})=\sum_{j=1}^na_{ij}x_{j}, \quad \beta_{i}=b_{i}
\]
where parameters $a_{ij}$ reflect how easily an individual of species $i$ attaches to an
individual of species $j$, the following expressions are obtained for
the proportions
$q_{i}=u_{i}/x_{i}$ on the slow manifold, which is uniquely
  defined by
\[
\bar q_{i}(x_{1},\cdots,x_{n})=\frac{1}{\ds 1+\frac{1}{b_{i}}\sum_{j=1}^na_{ij}x_{j}}
\]
as in Section \ref{SecFlocsLentRapide} (under the assumption of fast attachments and detachments), and
the reduced system is then written as:
\[
\left\{\begin{array}{lll}
\ds \frac{ds}{dt} & = & \ds D(S_{in}-s)-\sum_{j=1}^n\mu_{j}(s,x_{1},\cdots,x_{n})x_{j}\\[4mm]
\ds \frac{dx_{i}}{dt} & = & \ds
\mu_{i}\left(s,x_{1},\cdots,x_{n}\right)x_{i}-Dx_{i} \qquad (i=1\cdots
n)
\end{array}\right.
\]
by setting:
\[
\mu_{i}(s,x)=\bar q_{i}(x_{1},\cdots,x_{n})\mu_{u_{i}}(s)+(1-\bar q_{i}(x_{1},\cdots,x_{n}))\mu_{v_{i}}(s)
\]
The dynamics of the fast variables $q_{i}$ is given by the system
\[
\frac{dq_{i}}{d\tau}=-\alpha_{i}(x)q_{i}+b_{i}(1-q_{i}) \qquad
(i=1\cdots n)
\]
(where $\tau=t/\epsilon)$ for which $(\bar q_{1},\cdots,\bar q_{n})$ is clearly the unique
globally asymptotically stable equilibrium, for any fixed
$(x_{1},\cdots, x_{n})$. Therefore Thikonov's Theorem applies.
Notice that $\mu_{i}$ are density-dependent growth functions,
decreasing with respect to the $x_{i}$. This then
exactly corresponds to the context of density-dependent competition model, which shows that a coexistence
between species is possible \cite{LMR05,ADLS06}. 
It is thus concluded that a mechanism of (fast)
attachment and detachment of biomass is a possible (theoretical) explanation for the
maintaining of biodiversity in a chemostat.

\section{Consideration of distinct removal rates}
\label{SectionDiffD}

In this Section, we consider that the removal rates of planktonic and
attached bacteria are distinct, and accordingly to Assumptions
(\ref{hypagreg}) one has $D_{v}<D_{u}\leq D$.
This Section follows part of the work \cite{F13,FHCRS13}.
The reduction technique we use in Section
\ref{SecFlocsLentRapide} gives the following reduced model:
\begin{equation}
  \label{chem_attach_red_D}
  \left\{
  \begin{array}{lll}
    \ds \frac{ds}{dt} & =  & \ds D(S_{in}-s)-\mu(s,x)x\\[4mm]
    \ds \frac{dx}{dt} & =  & \ds \mu(s,x)x-d(x)x
  \end{array}\right.
\end{equation}
where we posit:
\[
d(x)=\bar p(x)D_{u} +(1-\bar p(x))D_{v} .
\]
Notice that the dynamics of the fast variable $p$ is given by
  equation (\ref{dyn_p_reduit}), exactly as in Section \ref{SecFlocsLentRapide}.
Therefore, Proposition \ref{propbarp} applies.
Let us underline that having a density dependent removal rate in the
chemostat model has not being considered (and justified) before in the
literature. 

\medskip

As in Section \ref{sec_coex_agreg}, we consider break-even
concentrations $\lambda_{u}$, $\lambda_{v}$ associated to functions
$\mu_{u}$ and $\mu_{v}$ but here for the distinct removal rates
$D_{u}$, $D_{v}$ (which are numbers that verify
$\mu_{u}(\lambda_{u})=D_{u}$ and $\mu_{v}(\lambda_{v})=D_{v}$).
Differently to the case of identical
removal rates, for which Assumptions \ref{hypagreg} implies the inequality
$\lambda_{u}<\lambda_{v}$, this later inequality is no longer
necessarily satisfied, as depicted on Figure \ref{figlambdas}.
\begin{figure}[h!]
  \begin{center}
   \includegraphics[width=.4\textwidth]{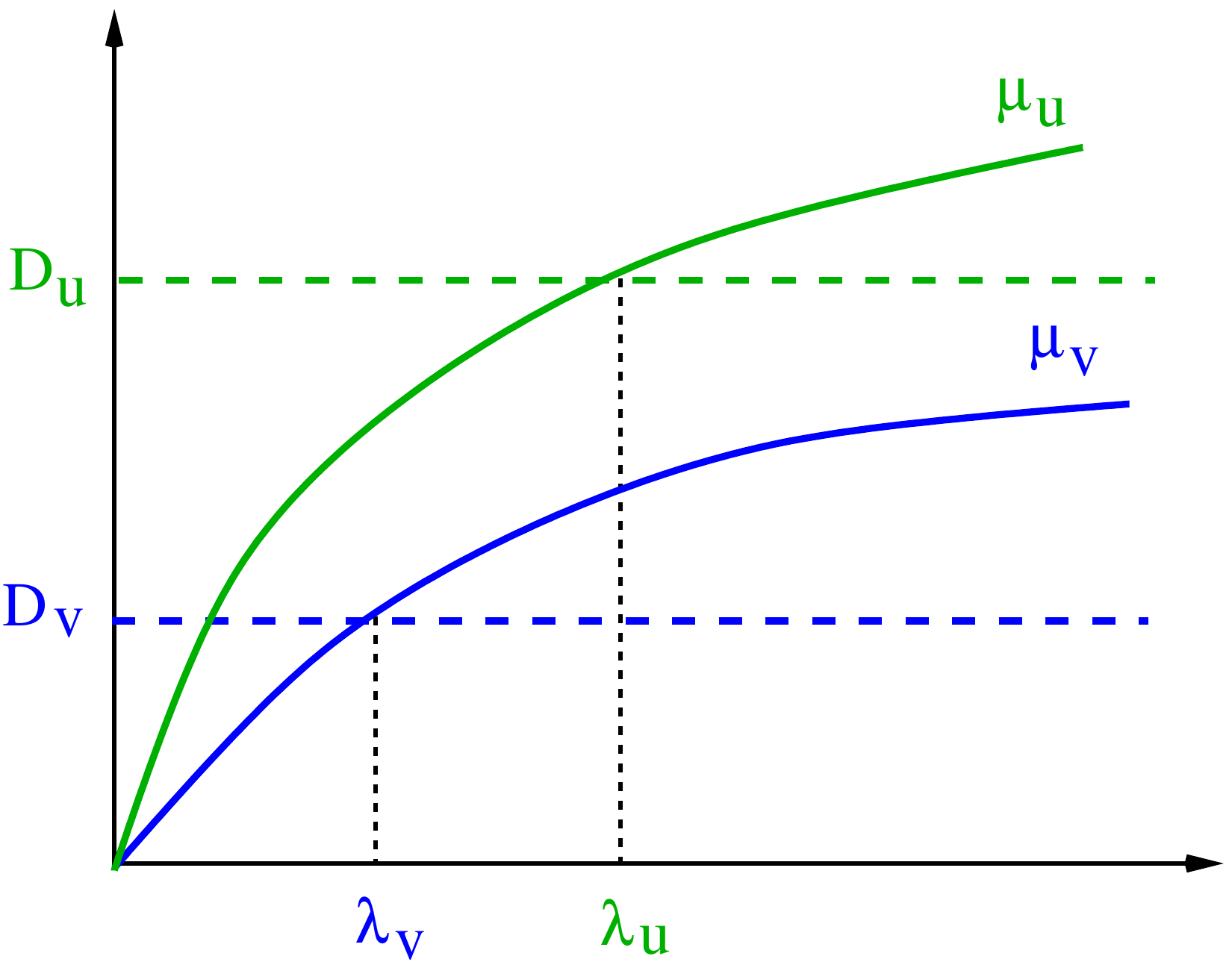}
\hspace{2mm}
   \includegraphics[width=.4\textwidth]{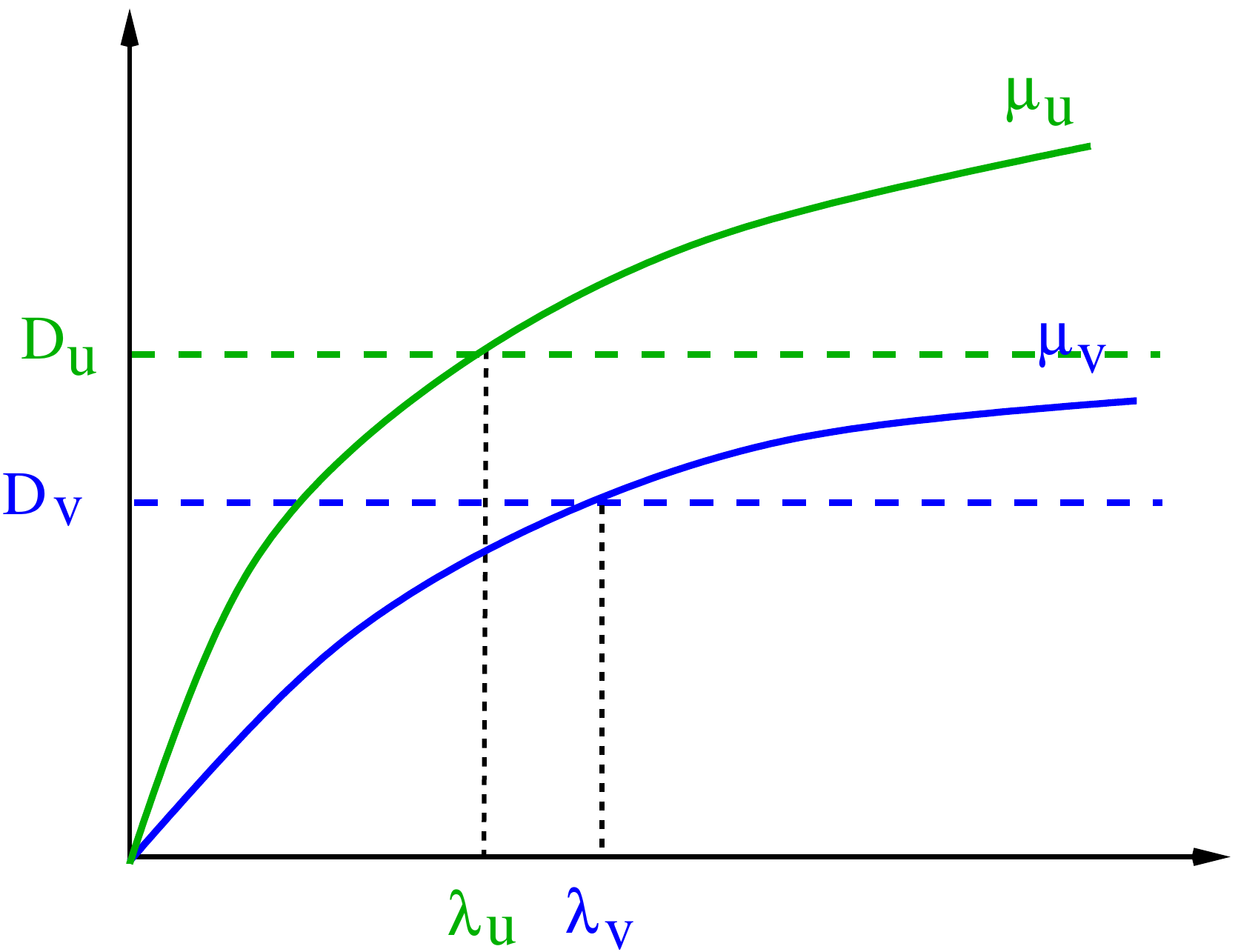}
    \caption{One can have $\lambda_{u}>\lambda_{v}$ (left) as well as
      $\lambda_{u} <\lambda_{v}$ (right).
      \label{figlambdas}}
  \end{center}
\end{figure}

\medskip
The model (\ref{chem_attach_red_D}) admits clearly the washout $(S_{in},0)$ as
an equilibrium, and let us study the possibility for the system to have
another steady state.
A positive equilibrium $(s^\star,x^\star)$ of dynamics (\ref{chem_attach_red_D})
has to fulfill
\begin{equation}
s^\star = \gamma(x^\star) := S_{in} -\frac{x^\star d(x^\star)}{D}
\end{equation}
and 
\begin{equation}
\label{equmud}
\mu(s^\star,x^\star)=d(x^\star)
\end{equation}
Notice that when $s<\min(\lambda_{u},\lambda_{v})$,
resp. $s>\max(\lambda_{u},\lambda_{v})$, one has $\mu(s,x)<d(x)$,
resp. $\mu(s,x)>d(x)$,  for any $x$. Therefore, one has
\[
s^\star \in
[\min(\lambda_{u},\lambda_{v}),\max(\lambda_{u},\lambda_{v})] .
\]
Since the functions $\mu_{u}$ and $\mu_{v}$ are increasing, the map $s
\mapsto \mu(s,x)$ is increasing for any $x$ and by the Implicit
Function Theorem, we deduce the existence of an unique solution of
(\ref{equmud}) as $s^\star=\phi(x^\star)$.
Therefore, a positive equilibrium (if it exists) has to fulfill
\[
\Gamma(x^\star):=\gamma(x^\star)-\phi(x^\star)=0 .
\]
Notice that one has $\Gamma(0)=S_{in}-\lambda_{u}$ and
$\Gamma(+\infty)=-\infty$. Therefore, the existence of a positive
equilibrium is guaranteed when $\lambda_{u}<S_{in}$.
Notice that this last condition is exactly the one that guarantees the
existence of a positive equilibrium for the chemostat model without
attachment:
\[
\left\{\begin{array}{lll}
    \ds \frac{ds}{dt} & =  & \ds D(S_{in}-s)-\mu_{u}(s)u\\[4mm]
    \ds \frac{du}{dt} & =  & \ds \mu_{u}(s)u-D_{u}u
  \end{array}\right.
\]
We examine now the possibilities of having more than one positive equilibrium. 
The function $\gamma$ is such that $\gamma(0)=S_{in}$ and
$\gamma(+\infty)=-\infty$. So, it has to decrease somewhere on the
interval $[0,+\infty)$.
From the Implicit Function Theorem, we can write
\[
\phi'(x) =
\frac{d'(x)-\frac{\partial\mu}{\partial
    x}(\phi(x),x)}{\frac{\partial\mu}{\partial s}(\phi(x),x)}
= \frac{\bar
  p'(x)}{\frac{\partial\mu}{\partial s}(\phi(x),x)}\left(D_{u}-D_{v}-\mu_{u}(\phi(x))+\mu_{v}(\phi(x))\right)
\]
When $\lambda_{u}<\lambda_{v}$, one has $\mu_{u}(s)\geq D_{u}$ and
$\mu_{v}(s)<D_{v}$ for any $s \in [\lambda_{u},\lambda_{v})$. 
As $\bar p'(x)<0$ (see Proposition \ref{propbarp}) and 
$\frac{\partial\mu}{\partial s}(\phi(x),x)>0$, we deduce
$\phi'(x)>0$ for any $x$ such that $\phi(x) \in
[\lambda_{u},\lambda_{v})$. 
At the opposite, when $\lambda_{u}>\lambda_{v}$, one has $\phi'(x)<0$
for any $x$ such that $\phi(x) \in [\lambda_{v},\lambda_{u})$.
This leaves open the possibility of having the functions $\gamma$ and $\phi$
simultaneously decreasing with more than one intersection of their
graphs (and then having the function $\Gamma$ non-monotonic with
alternate signs of $\Gamma'(x^\star)$ at the solutions $x^\star$).
At a positive equilibrium $E^*=(s^*,x^*)$, the Jacobian matrix is:
\[
J(E^*)=\left[\begin{array}{cc}
\ds -D-x^*\frac{\partial \mu}{\partial s}(s^*,x^*) & \ds -x^*\frac{\partial
  \mu}{\partial x}(s^*,x^*)-d(x^*)\\[4mm]
\ds x^*\frac{\partial \mu}{\partial s}(s^*,x^*) & \ds x^*\frac{\partial
  \mu}{\partial x}(s^*,x^*) -x^*d'(x^*)
\end{array}\right]
\]
with determinant:
\[
det J(E^*)=Dx^*\left(d'(x^*)-\frac{\partial
  \mu}{\partial x}(s^*,x^*)\right)+x^*\frac{\partial \mu}{\partial
s}(s^*,x^*)
\frac{d}{dx}[xd(x)](x^*) .
\]
One can easily check that it can be also written as
\[
det J(E^*)=-Dx^*\frac{\partial \mu}{\partial x}(s^*,x^*)\Gamma'(x^*) 
\]
which shows an alternation of stability of the equilibriums $E^*$
depending on the sign of $\Gamma'(x^*)$.
We illustrate the possibility of having multiple-stability in the case 
$\lambda_{v}<\lambda_{u}<S_{in}$ with the functions $\alpha$, $\beta$ given in
  (\ref{alpha_beta_simples}), that provide the simple expression
  (\ref{exp}) of the function $\bar p(\cdot)$, and Monod expressions
  for functions $\mu_{u}$, $\mu_{v}$.
Even in this simple case, the expression of the function $\Gamma$
is too complicated to conduct an analytic study. Figure
\ref{figbistability} presents the phase portrait of the reduced dynamics
(\ref{chem_attach_red_D}) and shows its bi-stability for the numerical values of the parameters
that have been chosen.
\begin{figure}[h!]
\begin{center}
\includegraphics[width=8.5cm]{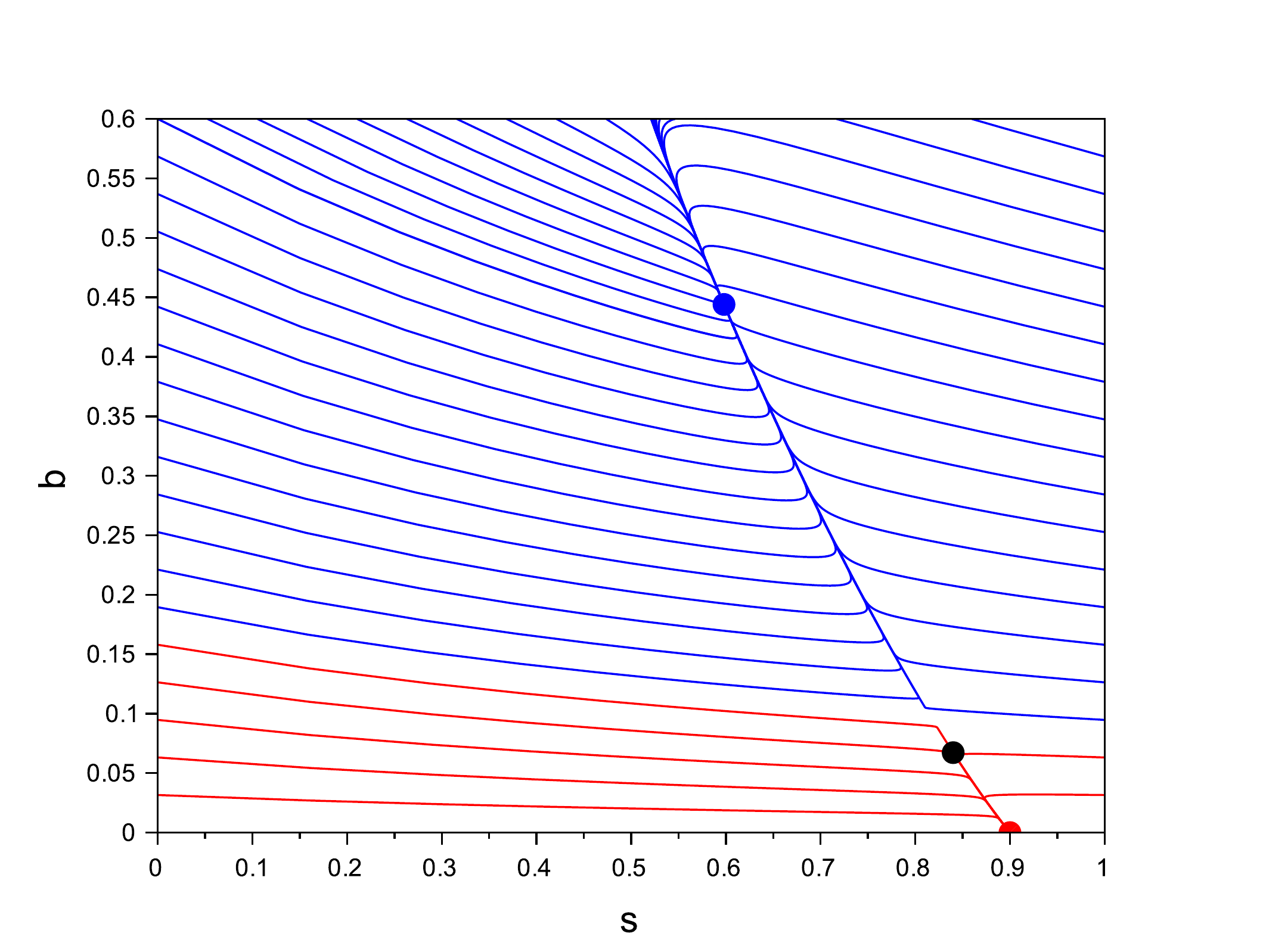}
\caption{\label{figbistability} Example of bi-stability with 
$\mu_{u}(s)=\frac{2s}{1+s}$, $\mu_{v}(s)=\frac{1.5 s}{0.8+s}$,
$D_{u}=1$, $D_{v}=0.5$,  $S_{in}=0.9$, $D=1$, $a/b=4$.}
\end{center}
\end{figure}

In the reference \cite{F13}, it is shown that under the additional assumption that the
map $x^* \mapsto x^*\bar p(x^*)$ is increasing, the multiplicity can
indeed occur only when $\lambda_{u}>\lambda_{v}$, and that generically
each equilibrium is necessarily either a stable node or a saddle
point.
Therefore, Tikhonov's Theorem, that has been recalled in Section
  \ref{SecFlocsLentRapide}, allows to claim that for any initial condition of the system
  (\ref{chem_attach}) such that
  $(s_{0},x_{0})$ does not belong to the stable manifold of a saddle
  equilibrium of the reduced dynamics (\ref{chem_attach_red_D}), the solution  $s(\cdot)$,
  $x(\cdot)$ converges to the solution of the reduced dynamics on the
  $[0,+\infty)$ time interval, that is for almost any initial condition.

\medskip
Finally, this shows that multiple stability can occur in the chemostat
model with attachment and distinct removal rates, even though the
growth functions are monotonically increasing.
This fact is quite remarkable comparing to the classical chemostat
model (i.e. without attachment) for which a multiple stability is
possible only for non-monotonic growth functions (see for instance
\cite{HLRS17}).
Nevertheless, the analysis of all the generic behaviors of the solutions of the model with
several species (and different removal rates) remains today an open
problem. Dynamics in dimension higher than two potentially reserve a richness
of possible behaviors. In particular, the possibility of having
unstable nodes leave open the possibilities of having limit cycles, as
illustrated in \cite{FRS16}.

\section{Conclusion}

In this work we have proposed a generic framework of chemostat models
with free and attached biomass compartments. Under
general assumptions, we have shown that a coexistence of the two forms
is possible and leads to a unique positive equilibrium which is
moreover globally asymptotically stable.
When the assumptions about fast attachment and detachment are
justified, we have also shown that reduced
models with the total biomass instead of planktonic and
attached ones provide natural extensions of the classical chemostat
model with a density-dependent growth function,
such as in the Contois model \cite{C59}. This allows coexistence of multiple
species when each of them can be present in the two forms: planktonic
and attached (with same or different species).
We have also shown that the consideration of different removal rates
for the free and attached biomass could lead to some non-intuitive
behaviors, such as multiple stability, that is today widely not well
understood in presence of several species.

\begin{acknowledgement}
This work has been initiated in the ``DISCO'' project funded by the
French National Research Agency (ANR) in the SYSCOMM program.
The author warmly thanks T. Sari, C. Lobry, J. Harmand and R. Fekih-Salem,
whose PhD work having inspired the present paper.
\end{acknowledgement} 

\bibliographystyle{plain} 
\bibliography{biblio}

\end{document}